\theoremstyle{definition} \newtheorem{defn}{Definition}[section]
\theoremstyle{plain} \newtheorem{thm}[defn]{Theorem}
\theoremstyle{plain} \newtheorem{propn}[defn]{Proposition}
\theoremstyle{plain} \newtheorem{lemma}[defn]{Lemma}
\theoremstyle{plain} 
\theoremstyle{plain} 
\theoremstyle{definition}  
\theoremstyle{definition}  
\theoremstyle{remark}  
\theoremstyle{remark}  
\theoremstyle{remark}  
\theoremstyle{plain} \newtheorem*{thm*}{Theorem}
\theoremstyle{plain} \newtheorem*{cor*}{Corollary}
\newcommand {\R} {\mathbb{R}}
\newcommand {\Q} {\mathbb{Q}}
\newcommand {\Z} {\mathbb{Z}}
\newcommand {\N} {\mathbb{N}}
\newcommand {\A} {\mathcal{A}}
\newcommand{\C}{\mathbb{C}}
\renewcommand {\epsilon}{\varepsilon}
\newcommand {\abs}[1] {\left\vert #1 \right\vert}
\author{Gareth Jones}
\address{gareth.jones-3@manchester.ac.uk}
\address{School of Mathematics, University of Manchester, Oxford Road, Manchester, M13 9PL, UK.}
\author{Shi Qiu}
\address{shi.qiu-3@postgrad.manchester.ac.uk}
\address{School of Mathematics, University of Manchester, Oxford Road, Manchester, M13 9PL, UK.}
\title[Integer-valued definable functions]{Integer-valued definable functions in $\R_{\text{an},\text{exp}}$}
\begin{document}
\begin{sffamily}
\maketitle
\begin{abstract} We give two variations on a result of Wilkie's (\cite{Wilkie}) on unary functions definable in $\R_{\text{an},\exp}$ that take integer values at positive integers. Provided that the function grows slower (in a suitable sense) than the function $2^x$, Wilkie showed that is must be eventually equal to a polynomial. We show the same conclusion under a stronger growth condition but only assuming that the function takes values sufficiently close to a integers at positive integers. In a different variation we show that it suffices to assume that the function takes integer values on a sufficiently dense subset of the positive integers (for instance the primes), again under a stronger growth bound than that in Wilkie's result. 
\end{abstract}
\section{Introduction}

In \cite{JTW}, Thomas, Wilkie and the first author studied functions $f:(0,\infty)\to\R$ definable in certain o-minimal structures under the assumption that $f$ is integer-valued, that is $f(n)\in \Z$ for all positive integer $n$. They showed that, under a rather strong growth bound, such a function must be a polynomial. This gives a weak real analogue of a classical theorem of Polya on integer-valued entire functions. Wilkie \cite{Wilkie} substantially improved the one-variable result of \cite{JTW}, proving that such an $f$ must be a polynomial provided that it satisfies a growth bound that is close to optimal. Wilkie's result also applies to a larger o-minimal structure than those considered in \cite{JTW}.

Here we consider a similar problem in which the function $f$ is no longer supposed integer-valued, but only assumed to be such that $f(n)$ is close to an integer for positive integers $n$. Throughout this paper, by definable, we mean definable in the structure $\R_{\text{an},\exp}$. This structure is o-minimal by work of van den Dries and Miller \cite{vdDMiller}. We prove the following.
\begin{thm}\label{close_to_integers}
 There is a $\delta>0$ with the following property. Suppose that $f:[0,\infty)\to \R$ is definable and analytic, and that there exists $c_0>0$ such that for all positive integers $n$ there is an integer $m_n$ such that
\[
\abs{f(n)-m_n}<c_0 e^{-3n}.
\]
If there are $c_1>0$ and $\delta'<\delta$ such that $|f(x)|<c_1e^{\delta' x}$ then there is a polynomial $Q$ such that
\[
Q(n)=m_n
\]
for all sufficiently large $n$.
\end{thm}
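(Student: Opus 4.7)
The plan is to use the structure of unary functions definable in $\R_{\text{an},\exp}$ to decompose $f$ asymptotically as a polynomial plus an exponentially decaying remainder, and then to conclude by a finite-difference argument on the integer sequence $m_n$.

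First, I would invoke the asymptotic expansion theory for unary definable functions in $\R_{\text{an},\exp}$ (following the work of van den Dries--Macintyre--Marker and van den Dries--Miller, and the preparation / expansion results that follow from it) to write, for large $x$,
\[
f(x) = \sum_i c_i \psi_i(x) + \text{(super-exponentially small remainder)},
\]
where each $\psi_i$ is a generalized monomial of the form $x^{r_i}(\log x)^{s_i} e^{g_i(x)}$ with $g_i$ a definable germ. The growth bound $|f(x)| < c_1 e^{\delta' x}$ with $\delta'<\delta$ constrains the leading terms: no surviving $g_i$ can grow faster than $\delta' x$.

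Next I would use the near-integer condition $|f(n) - m_n| < c_0 e^{-3n}$ to force every non-polynomial, non-decaying contribution in the expansion to have zero coefficient. The key point is that a surviving summand of the form $c\, x^r (\log x)^s e^{\alpha x}$ with $\alpha > 0$, or a non-polynomial bounded term such as $x^r(\log x)^s$ with $s\neq 0$ or $r\notin\N$, would produce values at positive integers whose distance to $\Z$ is bounded below by at worst a polynomial function of $1/n$ (via Baker-style transcendence estimates in the exponential case, and elementary Diophantine approximation in the polylogarithmic case), which is incompatible with the exponential closeness $c_0 e^{-3n}$. Choosing $\delta$ small enough up front---in particular smaller than $\log 2$, so that examples like $2^x$ are cut out by the growth hypothesis itself rather than by Diophantine obstructions---calibrates the argument so that it applies uniformly to every term the expansion can produce. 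What remains is a decomposition $f(x) = P(x) + R(x)$ with $P$ a polynomial of some degree $d$ and $|R(x)| = O(e^{-\eta x})$ for some $\eta>0$.

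Since $\Delta^{d+1}P \equiv 0$, applying the $(d+1)$-st forward difference to $m_n$ yields
\[
\Delta^{d+1} m_n = \Delta^{d+1} R(n) + \Delta^{d+1}\bigl(m_n - f(n)\bigr) = O(e^{-\eta n}) + O(e^{-3n}) \longrightarrow 0;
\]
being integer-valued and tending to zero, $\Delta^{d+1} m_n$ vanishes for all $n$ past some threshold, which produces a polynomial $Q \in \Q[x]$ with $Q(n) = m_n$ for all large $n$. The main obstacle is the middle step: the hypothesis is near-integer rather than exactly integer (unlike in \cite{Wilkie}), so the Diophantine/structural argument that kills non-polynomial contributions must tolerate the $O(e^{-3n})$ slack, and the universal constant $\delta$ is precisely the one that calibrates this trade-off between the admissible growth rate of $f$ and the fixed decay rate of the approximation.
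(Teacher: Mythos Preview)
Your proposal has a genuine gap at the crucial middle step. You assert that a surviving summand such as $c\,x^r(\log x)^s e^{\alpha x}$ with $0<\alpha<\delta$ would have its distance to $\Z$ at positive integers bounded below by at worst a polynomial in $1/n$, citing ``Baker-style transcendence estimates''. No such result is available: Baker's theorem gives lower bounds for nonzero linear forms in logarithms of \emph{algebraic} numbers, not for the distance from $c e^{\alpha n}$ (with arbitrary real $c,\alpha$) to the nearest integer. There is no known Diophantine obstruction preventing a definable function of small positive exponential growth from being exponentially close to integers at all positive integers; ruling this out is essentially the content of the theorem, and cannot be used as an input. The difficulty is compounded for sums of such terms, where cancellations can occur. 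Your calibration of $\delta$ below $\log 2$ only excludes terms that grow too fast, not the small-$\alpha$ exponentials that are the actual obstacle. (A secondary issue: the asymptotic expansion of a germ definable in $\R_{\text{an},\exp}$ is governed by transseries and is not in general a finite sum of generalized monomials modulo a super-exponentially small remainder.)

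The paper proceeds quite differently and never tries to decompose $f$. It invokes Wilkie's approximate-continuation theorem to replace $f$ by a function $g$ analytic on a right half-plane, with $|f(x)-g(x)|<e^{-Dx}$ on the reals and $|g(z)|\le c_2 e^{\delta|z|}$ on the half-plane. Then, adapting Waldschmidt's transcendence-method proof of a weak P\'olya theorem, it uses Siegel's Lemma to build an integer polynomial $P(X,Y)$ vanishing at the pairs $(n,m_n)$ for $n$ in an initial range, and applies a Jensen-type inequality (Proposition~\ref{jensen}) to $z\mapsto P(z,g(z))$ to force $|P(T,m_T)|<1$, hence $P(T,m_T)=0$, for successively larger integers $T$. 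Thus $P(n,m_n)=0$ for all large $n$; the $m_n$ eventually lie on a single branch of the algebraic curve $P=0$, and an integer-valued algebraic function is a polynomial. The complex continuation $g$ is what makes the maximum-modulus machinery applicable; real-variable asymptotics do not substitute for it.
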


We also prove a result in which our function $f$ is only assumed to be integer-valued on some sufficiently dense subset of the nonnegative integers. For this we fix $\A\subseteq \N$ such that there is a positive real  $\lambda$ such that for all sufficiently large $T$ we have 
\[
\frac{T}{(\log T)^\lambda} \ll \# \A \cap [0,T] \ll \frac{T}{(\log T)^\lambda}.
\]
With such an $\A$ fixed, we prove the following.
\begin{thm} \label{Primes} Suppose that $f:[0,\infty)\to \R$ is definable and analytic, and such that $f(n)$ is an integer for $n\in\A$. If there exist $\alpha>0$ and $c_1>0$ such that
\[
|f(x)|<c_1 \exp \left(\frac{x}{(\log x)^{2\lambda+2+\alpha}}\right)
\]
then $f$ is a polynomial.
\end{thm}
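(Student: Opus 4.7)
The plan is to argue by contradiction: assume $f$ is not a polynomial, and use the interplay between the growth bound, the $\R_{\text{an},\exp}$-definability of $f$, and the integrality of $f$ on the sparse set $\A$ to derive a contradiction.

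First I would use the definability and analyticity of $f$, together with analytic-continuation techniques for $\R_{\text{an},\exp}$-definable functions (as developed in \cite{Wilkie}), to extend $f$ to a holomorphic function on a complex region $\Omega$ containing $(T_0,\infty)\subset\R$ for some $T_0$, with a uniform bound on $|f(z)|$ of the shape $c'\exp(|z|/(\log|z|)^{2\lambda+2+\alpha})$ for $z\in\Omega$ of large modulus.

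Next, for each large $T$, enumerate $\A\cap[0,T]=\{a_1<\cdots<a_K\}$ with $K\asymp T/(\log T)^\lambda$, and consider the leading Newton divided difference, expressed via the Hermite contour integral
\[
\Delta_K \;=\; [a_1,\ldots,a_K;f] \;=\; \frac{1}{2\pi i}\oint_{|z|=R}\frac{f(z)}{\prod_{k=1}^{K}(z-a_k)}\,dz,
\]
with $R$ chosen inside $\Omega$ just larger than $T$. The growth bound on $f$ yields an upper bound $|\Delta_K|\leq U(T)$, while the integrality of the $a_k$ and of $f(a_k)$ forces $\Delta_K$ to be a rational number with denominator dividing a product of differences, so that either $\Delta_K=0$ or $|\Delta_K|\geq 1/L(T)$ for a controllable $L(T)$. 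Showing $U(T)<1/L(T)$ for all large $T$ makes $\Delta_K=0$ eventually, which forces the Newton interpolating polynomials of $f$ on $\A$ to stabilise to a fixed polynomial $P$; by the identity principle on the infinite set $\A$, then $f=P$, contradicting the assumption.

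The hardest part will be obtaining the correct arithmetic--analytic balance. With $K\asymp T/(\log T)^\lambda$, the naive Vandermonde bound gives $\log L(T)\asymp T^2/(\log T)^{2\lambda-1}$, whereas the contour integral at radius $R\asymp T$ (necessary because $|f|$ grows rapidly for larger $R$ while the contour must still enclose the interpolation points) only delivers $\log(1/U(T))$ of order $T/(\log T)^{\lambda-1}$---short by a factor $\sim K$ in the exponent. So a direct Vandermonde/contour argument is insufficient, and the crux of the proof must be to sharpen this balance, either by refining the arithmetic side (e.g.\ tighter denominator analysis of $\Delta_K$ exploiting the structure of $\A$ beyond density, or $p$-adic estimates on the differences $a_j-a_i$) or by employing a more sophisticated interpolation scheme (auxiliary polynomial factors, multiplicities, or carefully chosen sub-families of $\A$) that reduces the effective denominator below the raw Vandermonde. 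The specific exponent $2\lambda+2+\alpha$ in the hypothesis pinpoints the threshold at which, with the correct refinement, this balance just closes.
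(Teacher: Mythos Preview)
Your proposal has two genuine gaps, one technical and one structural.

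First, Wilkie's results in \cite{Wilkie} do \emph{not} give an analytic continuation of $f$ itself to a half-plane or to any wide complex region $\Omega$. What they give is an \emph{approximating} function $g$, analytic on a half-plane $\{\mathrm{Re}\,z>a\}$, with $|f(x)-g(x)|<e^{-\eta x}$ for real $x>a$ and $|g(z)|\le c_2\exp(|z|/(\log|z|)^{2\lambda+2})$. So the contour integral you write down for $\Delta_K$ is not available for $f$; you must work with $g$ and keep track separately of the error $f-g$ on the real interpolation points. This is not a cosmetic change: the whole architecture of the argument has to accommodate the fact that $g$ is not integer-valued on $\A$, only exponentially close to integers there.

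Second, and more seriously, you correctly diagnose that the naive divided-difference/Vandermonde comparison fails by a factor of roughly $K$ in the exponent, but you do not give a fix---you list several possible directions and leave it there. The paper's actual remedy is the classical transcendence-method device: instead of a one-variable interpolation polynomial, use Siegel's Lemma to build a nonzero \emph{two-variable} polynomial $P(X,Y)=\sum_{i\le L}\sum_{j\le M} p_{i,j}\binom{X}{i}Y^j$ with integer $p_{i,j}$ and $P(n,f(n))=0$ for $n\in\A\cap[\epsilon T,T]$, with $T=(L+1)(M+1)$ and $M$ chosen of size about $(\log L)^{\lambda+1}$. One then applies the Jensen-type estimate of Proposition~\ref{jensen} to $\phi(z)=P(z+T_1,g(z+T_1))$ (using the approximation $g$, not $f$) to force $|P(T_1,f(T_1))|<1$, hence $=0$, at the next point $T_1\in\A$, and inducts. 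O-minimality then gives $P(x,f(x))\equiv 0$, so $f$ is algebraic, hence a polynomial. The freedom in choosing $M$ relative to $L$ is exactly what closes the arithmetic--analytic gap you identified, and is what produces the exponent $2\lambda+2$ in the growth threshold; your single-variable scheme has no such parameter to tune.
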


So for instance, if $|f(x)|<c \exp\left(\frac{x}{(\log x)^5}\right)$ and $f$ is integer-valued on the primes, then $f$ is a polynomial.

Before discussing the proofs of these results, we briefly discuss Wilkie's proof. Wilkie first shows that definable unary functions whose growth is at most exponential can be approximated by a function which admits an analytic continuation (as a complex function) to a half-plane. The diophantine part of the proof then follows Polya's method, as adapted by Langley \cite{Langley} to functions on a half-plane. This seems to need the function to be take integer values at all positive integers, and doesn't seem to adapt to a set $\A$ as in the second theorem above. And it is difficult to see how this method could be used to prove Theorem \ref{close_to_integers}. The method of \cite{JTW}, which relies on a counting theorem in \cite{JT} gives nothing in the context of Theorem \ref{close_to_integers}. Instead, we adapt Waldschmidt's proof, via transcendence methods, of a weak form of Polya's Theorem \cite{Waldschmidt}. See also Chapter 9 of Masser's recent book \cite{Masser}. Waldschmidt's proof was adapted by Hirata \cite{Hirata} to show that an entire $f$ that is exponentially close to integers at positive integers, and doesn't grow too quickly must be a polynomial. And a more precise version of this result was given by  Ito and Hirata-Kohno  \cite{Ito_HirataKohno}. This fails in the o-minimal setting (consider say $e^{- a x}$ for large $a$, or even worse $\exp(- \exp x)$). But the method, applied not to the function itself but to an approximating function with a large continuation, provided by Wilkie's work, goes through and gives Theorem \ref{close_to_integers}. The second Theorem is proved by similar methods, and further exploits o-minimality, in that to show that an analytic function definable in an o-minimal structure is identically zero, it is enough to show that it has infinitely many zeros. In the setting of our second Theorem, the method of \cite{JTW}, applying the counting theorem for curves in \cite{JT} would give that $f$ is a polynomial provided that it is definable in the structure $\R_{\exp}$ and satisfies the much stronger growth bound $|f(x)|<e^{x^\epsilon}$, eventually, for all positive $\epsilon$. 

It will be clear from the proofs that a similar result can be obtained, for instance, assuming that $f$ is close to integers (in the sense of Theorem \ref{close_to_integers}) on a sequence $\A$ as in Theorem \ref{Primes}. Or we could suppose that the sequence $\A$ in Theorem \ref{Primes} had more points, and get a corresponding relaxation in the growth bound. Various results of this nature will appear in the thesis of the second author. Finally, the main point of \cite{JTW} was to consider functions of several variables that take integer values at tuples with integer coordinates. Again, considerations of this kind will appear in the thesis of the second author.

\subsubsection*{Acknowledgements} We would like to thank Alex Wilkie for very helpful discussions concerning \cite{Wilkie}. We are also most grateful to the referee for their comments and suggestions.

\section{Preliminaries}
We begin with various estimates that we shall use repeatedly. First, a well-known estimate for binomial coefficients (see for instance (9.9) on page 104 of \cite{Masser}). 
\begin{lemma}\label{binomial}
Suppose that $z\in \C$ and that $i$ is a nonnegative integer with $i\le L$. Then
\[
\abs{\binom{z}{i} }\le e^L\left( \frac{\abs{z}+L}{L}\right)^L.
\]
\end{lemma}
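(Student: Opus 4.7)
The plan is to proceed directly from the definition $\binom{z}{i} = \frac{z(z-1)\cdots(z-i+1)}{i!}$. First I would dispose of the trivial case $i=0$, where the left side is $1$ and the right side is at least $1$ (assuming $L\ge 1$; otherwise the statement is only nontrivial when $i\ge 1$). For $i\ge 1$, each of the $i$ factors in the numerator satisfies $|z-k|\le |z|+k\le |z|+L$ for $0\le k\le i-1$, so
\[
\abs{\binom{z}{i}} \le \frac{(\abs{z}+L)^{i}}{i!}.
\]

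Next I would invoke the standard lower bound $i!\ge (i/e)^{i}$, which follows immediately from $e^{i}=\sum_{k\ge 0} i^{k}/k! \ge i^{i}/i!$. This gives
\[
\abs{\binom{z}{i}} \le e^{i}\left(\frac{\abs{z}+L}{i}\right)^{i}.
\]

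The remaining step is to compare $e^{i}(t/i)^{i}$ with $e^{L}(t/L)^{L}$ where $t=\abs{z}+L\ge L\ge i\ge 1$. Taking logarithms, this reduces to showing that $g(x)=x+x\log(t/x)$ satisfies $g(i)\le g(L)$. Since $g'(x)=\log(t/x)\ge 0$ on $[1,L]$ (because $t\ge L\ge x$), $g$ is nondecreasing there, and the inequality follows by exponentiating. Combining the two displays yields the claim.

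There is no serious obstacle here: the estimate is a standard warm-up lemma, and the only small point of care is the elementary monotonicity argument at the end, which drops out of a one-line derivative computation.
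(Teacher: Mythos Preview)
Your argument is correct and complete. The paper itself does not prove this lemma; it merely cites it as (9.9) on page 104 of Masser's book, so there is no in-paper proof to compare against, and your direct estimate via $i!\ge (i/e)^i$ together with the monotonicity of $x\mapsto x+x\log(t/x)$ on $[1,L]$ is exactly the standard way to obtain the bound.
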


The following lemma is easy. 
\begin{lemma}\label{polydiff}
Suppose that $P=\sum_{i=0}^L\sum_{j=0}^M p_{i,j}\binom{X}{i}Y^j$ is a polynomial with complex coefficients. Then for $a>0 $ and $b,b'\ge 1$ we have
\[
\abs{P(a,b)-P(a,b')} \le (L+1)(M+1)^2\max \{ |p_{i,j}|\} \max \left\{ \abs{\binom ai }: i \le L  \right\} b^Mb'^M|b-b'|.
\]
\end{lemma}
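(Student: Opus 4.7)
The proof is a routine estimate, and the plan is simply to expand the difference term by term and factor out $b-b'$.

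First I would write
\[
P(a,b) - P(a,b') \;=\; \sum_{i=0}^{L}\sum_{j=0}^{M} p_{i,j}\binom{a}{i}\bigl(b^{j}-b'^{\,j}\bigr),
\]
so the task reduces to bounding each $|b^{j}-b'^{\,j}|$ uniformly. Using the telescoping identity $b^{j}-b'^{\,j}=(b-b')\sum_{k=0}^{j-1}b^{k}b'^{\,j-1-k}$, together with the hypothesis $b,b'\ge 1$, each summand satisfies $b^{k}b'^{\,j-1-k}\le b^{j-1}b'^{\,j-1}\le b^{M}b'^{\,M}$ because $j-1\le M$. Hence
\[
\bigl|b^{j}-b'^{\,j}\bigr| \;\le\; j\,b^{M}b'^{\,M}|b-b'| \;\le\; (M+1)\,b^{M}b'^{\,M}|b-b'|.
\]

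Then I would apply the triangle inequality to the double sum and pull out the uniform bounds: there are $(L+1)(M+1)$ index pairs, each coefficient is at most $\max|p_{i,j}|$, each binomial is at most $\max\{|\binom{a}{i}|:i\le L\}$, and each difference $|b^{j}-b'^{\,j}|$ is at most $(M+1)b^{M}b'^{\,M}|b-b'|$ by the estimate above. Multiplying these together gives precisely
\[
\bigl|P(a,b)-P(a,b')\bigr| \;\le\; (L+1)(M+1)^{2}\max|p_{i,j}|\,\max_{i\le L}\Bigl|\binom{a}{i}\Bigr|\,b^{M}b'^{\,M}|b-b'|,
\]
which is the claimed inequality.

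There is no real obstacle here; the only thing to watch is making sure the exponents $b^{M}b'^{\,M}$ (rather than the weaker $b^{M-1}b'^{\,M-1}$) and the factor $(M+1)^{2}$ (rather than $M(M+1)$) come out as written, both of which follow immediately from the crude bounds $j-1\le M$ and $j\le M+1$ available because $b,b'\ge 1$.
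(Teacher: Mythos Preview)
Your argument is correct and is exactly the routine computation the paper has in mind; the paper itself simply declares the lemma ``easy'' and omits the proof. Your telescoping of $b^{j}-b'^{\,j}$ and the crude bounds $j\le M+1$, $j-1\le M$ are precisely what is needed, and nothing more is required.
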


We will use the following estimate which is a special case of Lemma 3 of \cite{Tijdeman}. 
\begin{lemma} Suppose that $a_1,\ldots,a_N$ are pairwise distinct integers. Then
\[
\prod_{i=2}^N\abs{a_1-a_i} \ge \frac{(N-1)!}{2^{N-1}}.
\]
\end{lemma}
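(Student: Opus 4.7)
The plan is to reduce the problem to the well-known task of bounding a product of pairwise distinct nonzero integers from below. First I would set $b_i = a_1 - a_{i+1}$ for $i=1,\ldots,N-1$; since the $a_j$ are pairwise distinct, the $b_i$ are pairwise distinct nonzero integers, and the quantity to be estimated is $\prod_{i=1}^{N-1}\abs{b_i}$.

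The key observation is then that for any positive integer $m$, at most two of the $b_i$ can have absolute value $m$ (namely $m$ and $-m$). Consequently the multiset $\{\abs{b_1},\ldots,\abs{b_{N-1}}\}$ consists of $N-1$ positive integers with each value appearing at most twice, and the product is minimised when we choose the smallest admissible multiset. Writing $N-1=2k$ or $N-1=2k+1$, this minimising multiset is $\{1,1,2,2,\ldots,k,k\}$ in the even case, giving a product of $(k!)^2$, and $\{1,1,\ldots,k,k,k+1\}$ in the odd case, giving $k!(k+1)!$.

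Finally I would compare these minima with $(N-1)!/2^{N-1}$. The identities $(k!)^2 = (2k)!/\binom{2k}{k}$ and $k!(k+1)! = (2k+1)!/\binom{2k+1}{k}$, together with the trivial bound $\binom{N-1}{\lfloor (N-1)/2\rfloor}\le 2^{N-1}$ (from the binomial theorem applied to $(1+1)^{N-1}$), yield the stated inequality in both parities, completing the proof.

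There is no substantial obstacle here; the statement is a short combinatorial estimate and the only care needed is in separating the two parities of $N-1$ and justifying that the displayed multisets genuinely minimise the product. (The full Lemma 3 of \cite{Tijdeman} is considerably more general, but the integer special case reduces cleanly to this elementary count.)
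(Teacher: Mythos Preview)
Your argument is correct. The paper gives no proof of this lemma at all: it simply records it as a special case of Lemma~3 of Tijdeman \cite{Tijdeman}. Your elementary route---reducing to a product of $N-1$ pairwise distinct nonzero integers, observing that each absolute value can occur at most twice, minimising greedily (equivalently, noting that the $i$th smallest absolute value is at least $\lceil i/2\rceil$), and finishing with $\binom{N-1}{\lfloor(N-1)/2\rfloor}\le 2^{N-1}$---is the standard direct proof of this integer special case and is entirely sound. So there is nothing to compare against: you have supplied a self-contained proof where the paper offers only a citation.
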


In place of the usual formula for the number of zeros of a complex function, we use the following, a special case of Lemma 6 of \cite{Ito_HirataKohno}.
\begin{propn}\label{jensen} Suppose that $\phi$ is analytic on $|z|\le R$, and that $a_1,\ldots, a_N$ are nonzero complex numbers of modulus less than $R$. Then 
\begin{equation}\label{jensenestimate}
|\phi(0)| \le |\phi|_R \prod_{i=1}^N \frac{ |a_i|}{R} + \sum_{i=1}^N \left( |\phi(a_i)| \prod_{j=1}^N \frac{ |R^2 - a_i\overline{a_j}|}{R^2}\prod_{k=1,k\ne n}^N \frac{|a_k|}{|a_k-a_i|}\right).
\end{equation}
\end{propn}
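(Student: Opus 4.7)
The natural route is to interpret (\ref{jensenestimate}) as a weighted residue identity arising from a finite Blaschke product. I would set up
\[
B(z) = \prod_{i=1}^N \frac{R(z-a_i)}{R^2 - \overline{a_i}z},
\]
which vanishes simply at each $a_i$, satisfies $|B(z)| = 1$ on $|z|=R$, and has $|B(0)| = \prod_{i=1}^N |a_i|/R$. The quotient $\phi(z)/B(z)$ is then meromorphic on $\{|z|\le R\}$ with simple poles exactly at the $a_i$, each of residue $\phi(a_i)/B'(a_i)$.

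The second step is to apply Cauchy's integral formula to $(\phi(w)/B(w))/w$ around $|w|=R$, accounting for the poles at $a_1,\ldots,a_N$ as well as at $w=0$. This yields
\[
\frac{\phi(0)}{B(0)} = \frac{1}{2\pi i}\oint_{|w|=R} \frac{\phi(w)}{w\,B(w)}\, dw \;-\; \sum_{i=1}^N \frac{\phi(a_i)}{a_i\, B'(a_i)}.
\]
Because $|B(w)| = 1$ on the boundary and the circle has length $2\pi R$, the contour integral is bounded in modulus by $|\phi|_R$. Multiplying through by $|B(0)|$ and using the triangle inequality gives
\[
|\phi(0)| \le |B(0)|\cdot |\phi|_R + \sum_{i=1}^N \frac{|B(0)|}{|a_i|\,|B'(a_i)|}\cdot |\phi(a_i)|.
\]

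What remains is to match the coefficient of $|\phi(a_i)|$ with the expression in (\ref{jensenestimate}). Using
\[
B'(a_i) = \frac{R}{R^2 - |a_i|^2} \prod_{k\ne i} \frac{R(a_i-a_k)}{R^2 - \overline{a_k}a_i}
\]
and $|B(0)| = \prod_k |a_k|/R$, the factor $R^2 - |a_i|^2$ produces the $j=i$ term of $\prod_{j=1}^N |R^2 - a_i\overline{a_j}|/R^2$, while the remaining factors $|R^2 - \overline{a_k}a_i|$ and $|a_i - a_k|$ combine with the $|a_k|$ supplied by $|B(0)|/|a_i|$ to give both the $j \ne i$ contribution and the product $\prod_{k\ne i} |a_k|/|a_k - a_i|$. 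The conceptual content is just that a finite Blaschke product has unit modulus on the boundary, and I expect the only slightly delicate step to be the final bookkeeping of these cancellations, since a miscounted power of $R$ would destroy the identity.
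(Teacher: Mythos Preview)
Your argument is correct: the Blaschke product $B$ has unit modulus on $|z|=R$, the residue computation for $\phi/(zB)$ gives exactly the identity you wrote, and the final bookkeeping checks out (the factor $(R^2-|a_i|^2)/R^2$ supplies the $j=i$ term of $\prod_j |R^2-a_i\overline{a_j}|/R^2$, and the remaining factors split as you describe). The only tacit assumption is that the $a_i$ are pairwise distinct, which is needed for the poles to be simple and is implicit in the statement anyway.

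As for comparison: the paper does not actually prove this proposition. It simply records it as a special case of Lemma~6 of Ito and Hirata-Kohno and moves on. Your Blaschke--Cauchy argument is the standard route to such interpolation inequalities and is almost certainly what underlies the cited lemma, so there is nothing materially different to compare.
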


\section{Functions with values close to integers}
For our proofs we will use Wilkie's results on approximate continuations. The result we need in this section is as follows. 
\begin{thm}[Wilkie] Suppose that $f:\R \to \R$ is definable and suppose that there exist $c_1>0$ and $\delta>0$  such that $|f(x)|<c_1 e^{\delta x}$ for all large $x$. Given $D>0$ and $\epsilon>0$ there is an $a \in \R$ and an analytic $g: \{ z : \text{Re}(z) >a\} \to \C $ such that 
\begin{enumerate}
\item $|g(x) - f(x) | < e^{ -D x}$ for all $x >a$,
\item there exists $c_2>0$ such that $|g(z)| <c_2 e^{(\delta+\epsilon) |z|}$ for all $z$ such that $\text{Re}(z) >a$. 
\end{enumerate}
\end{thm}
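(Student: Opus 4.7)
The plan is to extract the analytic approximation from the structure theory of functions definable in $\R_{\text{an},\exp}$. The starting point is that any unary $f$ definable in this structure admits, as $x\to\infty$, an asymptotic description in the exp--log hierarchy (in the sense of van den Dries--Macintyre--Marker and of Wilkie's own prior work). The hypothesis $|f(x)| < c_1 e^{\delta x}$ forces the outermost exponentials in such a description to have linear arguments of slope at most $\delta$, so after rearranging and truncating sufficiently deeply one expects to write
\[
f(x) \;=\; \sum_{i=1}^{N} p_i(x)\, e^{\alpha_i x} \;+\; R(x)
\]
for $x>a$, with $0\le \alpha_i\le \delta$, each $p_i$ definable and of subexponential growth, and $|R(x)|<e^{-Dx}$.

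The approximating function $g$ is then built by replacing each summand with its natural analytic continuation to a right half-plane $\{\operatorname{Re}(z)>a\}$. The exponentials $e^{\alpha_i z}$ extend to all of $\C$ with $|e^{\alpha_i z}|\le e^{\delta|z|}$, and the slow-growing factors $p_i$ are continued by induction on the complexity of their definable description, each time replacing a restricted analytic piece by its natural complex extension. Their subexponential growth on the real axis propagates, on a sufficiently far right half-plane, to a bound $|p_i(z)| \le c\, e^{\epsilon|z|}$, which yields the required estimate $|g(z)|< c_2 e^{(\delta+\epsilon)|z|}$ in condition~(2). Choosing the truncation depth $N$ and the abscissa $a$ large enough in terms of $D$ and $\epsilon$ then secures condition~(1), since each successive term in the asymptotic expansion decays exponentially faster than its predecessor.

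The main obstacle is the first step: producing the asymptotic normal form together with complex analytic continuations of controlled growth. This requires a careful analysis of terms in $\R_{\text{an},\exp}$, identifying at each layer which restricted analytic witnesses admit natural complex extensions and bounding the growth of those extensions on a right half-plane. This structural analysis is the technical core of Wilkie's argument in \cite{Wilkie}; in the present paper the theorem is invoked as a black box, so the remaining steps above are essentially bookkeeping on top of it.
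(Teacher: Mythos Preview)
Your proposal is correct and follows essentially the same route as the paper: decompose $f$ as a finite sum $\sum f_i(x)e^{s_i x}$ of subexponential factors times linear exponentials, up to an error $<e^{-Dx}$ (this is Corollary~4.8 of \cite{Wilkie}), continue each $f_i$ analytically to a right half-plane (Theorem~4.2 of \cite{Wilkie}) with the subexponential bound $|f_i(z)|\le e^{\epsilon|z|}$ (Lemma~5.3 of \cite{Wilkie}), and observe $s_i\le\delta$ from the growth hypothesis on $f$. The only cosmetic discrepancy is your assumption $\alpha_i\ge 0$, which the paper neither asserts nor needs (negative exponents only help the growth bound on the half-plane).
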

\begin{proof} Apply Corollary 4.8 in \cite{Wilkie} to get definable functions $f_1,\ldots, f_l :(a,\infty)\to \R$, pairwise distinct reals $s_1,\ldots,s_l$ and a positive $a$ such that 
\begin{equation}\label{close1}
\abs{ f(x) - \sum_{i=1}^l f_i(x) \exp(s_i x)} < \exp (- D x),
\end{equation}
for all $x>a$. Moreover, in the notation of \cite{Wilkie}, the functions $f_i$ are all in $\mathcal{R}_{\text{subexp}}$. It then follows from Theorem 4.2 in \cite{Wilkie} that, perhaps after increasing $a$, these functions continue to the half-plane $\{ z: \text{Re}(z) >a \}$. And by Lemma 5.3 in \cite{Wilkie}, increasing $a$ if necessary, these continuations are such that
\[
|f_i(z)| \le \exp (\epsilon |z|)
\]
for any $z$ in the half-plane with large modulus, and $i=1,\ldots, l$. By the growth condition on $f$ and \eqref{close1}, we have that $s_i \le \delta$ for each $i$. So we can take $g$ to be the continuation of $\sum_{i=1}^l f_i(x) \exp(s_i x)$.

\end{proof}

\begin{thm} There is a $\delta>0$ with the following property. Suppose that $f:[0,\infty)\to \R$ is definable and analytic, and that there exists $c_0>0$ such that for all positive integers $n$ there is an integer $m_n$ such that
\[
\abs{f(n)-m_n}<c_0 e^{-3n}.
\]
If there are $c_1>0$ and $\delta'<\delta$ such that $|f(x)|<c_1e^{\delta' x}$ then there is a polynomial $Q$ such that
\[
Q(n)=m_n
\]
for all sufficiently large $n$.
\end{thm}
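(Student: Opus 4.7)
The proof follows Waldschmidt's transcendence method, in the form adapted by Hirata \cite{Hirata} to functions merely close to integers at positive integers, but applied not to $f$ itself but to an approximating function on a half-plane furnished by Wilkie's theorem above. The constant $\delta$ emerges from the quantitative balancing at the end of the argument.

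First I would fix $D = 4$ and a small $\epsilon > 0$ and invoke the preceding theorem to produce an analytic $g$ on a right half-plane $\{\text{Re}(z) > a\}$ with $|g(x) - f(x)| < e^{-Dx}$ for $x > a$ and $|g(z)| < c_2 e^{(\delta'+\epsilon)|z|}$ throughout. For large integers $n$, this yields $|g(n) - m_n| < (c_0+1)e^{-3n}$. I would then construct an auxiliary polynomial
\[
P(X, Y) = \sum_{i=0}^L \sum_{j=0}^M p_{i,j} \binom{X}{i} Y^j,
\]
with integer coefficients not all zero, such that $P(n, m_n) = 0$ for $n = N, N+1, \ldots, 2N$. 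This is a homogeneous linear system in $(L+1)(M+1)$ unknowns whose integer entries $\binom{n}{i} m_n^j$ are bounded, via Lemma \ref{binomial} and $|m_n| \leq |f(n)| + c_0$, by $\exp(O(L \log N + \delta' M N))$; taking $(L+1)(M+1)$ a little larger than $2(N+1)$, Siegel's lemma produces $p_{i,j}$ with $\max|p_{i,j}|$ of the same exponential order.

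The core of the argument is to apply Proposition \ref{jensen} to the analytic function $\phi(z) := P(z, g(z))$. For $n \in \{N, \ldots, 2N\}$, Lemma \ref{polydiff} combined with the bound on $|g(n) - m_n|$ shows that $|\phi(n)| = |P(n, g(n)) - P(n, m_n)|$ is exponentially small. For an integer $n_0 > 2N$, I would apply Proposition \ref{jensen} to $z \mapsto \phi(z+n_0)$ on a disk $|z| \leq R$ with $R$ slightly less than $n_0$ (so that the disk sits in the half-plane), taking the zeros $a_i := n - n_0$ for $n \in \{N, \ldots, 2N\}$. The first term in \eqref{jensenestimate} is bounded by $\exp(O(L\log(n_0+R) + \delta' M(n_0+R)))\cdot(n_0/R)^{N+1}$; in the second, the products $\prod_j|R^2 - a_i\overline{a_j}|/R^2$ are bounded by $2^N$, the products $\prod_{k\ne i}|a_k|/|a_k - a_i|$ are handled by the Tijdeman estimate above applied to the integer differences, and the values $|\phi(a_i+n_0)|$ are exponentially small. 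Balancing $L, M, N, R$ as suitable multiples of $n_0$ shows that when $\delta' < \delta$ for some absolute $\delta$ extracted from this balance and the ``$3$'' in the hypothesis, one obtains $|\phi(n_0)| < 1/2$. A further application of Lemma \ref{polydiff} gives $|P(n_0, m_{n_0}) - \phi(n_0)|$ exponentially small, so $|P(n_0, m_{n_0})| < 1$; since $P(n_0, m_{n_0}) \in \Z$, it vanishes. Hence $P(n, m_n) = 0$ for all sufficiently large $n$.

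To extract $Q$, for $x$ large let $\tilde m(x)$ be the root of $P(x, Y) = 0$ in $Y$ nearest to $f(x)$; this is definable in $\R_{\text{an},\text{exp}}$ and, for $x$ large, agrees with a single analytic branch of the algebraic equation $P(X,Y) = 0$, so has polynomial growth. Distinct algebraic branches differ by at least $x^{-C}$ for some constant $C$, which beats $c_0 e^{-3n}$, so $m_n$ (being a root of $P(n, Y) = 0$ within $c_0 e^{-3n}$ of $f(n)$) is the unique nearest root, giving $\tilde m(n) = m_n$ for large $n$. Thus $\tilde m$ is a definable function of polynomial growth that is integer-valued at all large positive integers, so by Wilkie's theorem \cite{Wilkie} it agrees with a polynomial $Q$ on large positive integers, giving $Q(n) = m_n$ as required. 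The main obstacle throughout is the quantitative balancing in the Jensen estimate, which is what pins down the admissible absolute constant $\delta$.
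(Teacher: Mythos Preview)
Your outline follows the paper's strategy (Wilkie's approximating $g$, Siegel's lemma for an auxiliary $P$, then the Jensen-type estimate of Proposition~\ref{jensen}), but there is a genuine gap in the extrapolation step. You construct a single $P$ using $N,\ldots,2N$ and then, for an integer $n_0>2N$, apply Proposition~\ref{jensen} with the same $N+1$ near-zeros, writing that ``balancing $L,M,N,R$ as suitable multiples of $n_0$'' yields $|\phi(n_0)|<1/2$. These two choices are incompatible. If $L,M,N$ are fixed when $P$ is built, then for $n_0\gg N$ the ratios $|a_i|/R=(n_0-n)/R$ tend to $1$, so $\prod_i|a_i|/R$ gives no decay, while $|\phi|_R$ grows like $\exp(c\,\delta M n_0)$; the first term of \eqref{jensenestimate} therefore blows up. (Your displayed bound $(n_0/R)^{N+1}$ is even $>1$, a symptom of this.) If instead you let $L,M,N$ depend on $n_0$, then $P=P_{n_0}$ varies with $n_0$ and there is no single polynomial $P$ for which $P(n,m_n)=0$ for all large $n$, so your final paragraph (nearest root $\tilde m$, branch separation, Wilkie's integer-valued theorem) no longer applies.

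What the paper does, and what is missing from your sketch, is an induction: having $P(n,m_n)=0$ for all $n\in[T/2,T')$, it applies Proposition~\ref{jensen} centred at $T'$ using the $\sim T'/2$ zeros in $[T'/2,T')$, so the product $\prod|a_n|/T'\le 2^{-T'/2}$ decays at a rate proportional to $T'$ and beats the growth of $|\psi|_{T'}$. The choice of a fixed large $M$ and then a small $\delta$ with $e^{3\delta M(M+1)}<2$ makes both the base case and the inductive step go through uniformly, for one fixed $P$. Once you have a single $P$ with $P(n,m_n)=0$ for all large $n$, your endgame is fine and close to the paper's (the paper uses o-minimality to pin down a single algebraic branch and then Serre's criterion; your nearest-root construction and appeal to Wilkie's integer-valued theorem is an acceptable variant).
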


\begin{proof}
Rather than the assumption in the statement, we start by assuming that the integers $m_n$ are such that 
\[
\abs{f(n)-m_n}<c_0 e^{-Dn}
\]
for some positive $D$, and show later that the choice $D=3$ is sufficient. Note that we can assume without loss of generality that $f$ is positive. Suppose that $ f(x) \le c_1e^{\delta' x}$, for some positive $\delta'<\delta<1$, where we fix $\delta$ later.   By Wilkie's Theorem above, there is a function $g$ analytic in some right halfplane, and such that $|g(x)-f(x)|<e^{-Dx}$ for large $x$. Moreover, we have $|g(z)|\le c_2 e^{\delta |z|}$. We will assume that $c_2\ge 2 c_1$. Translating, we can assume that the halfpane is $\{ z: \text{Re} z \ge 0\}$, and that for all $x\ge 0$ we have
\begin{equation}\label{fgclose}
|g(x)-f(x)| < c_3 e^{-D x}.
\end{equation}

We let $M,L$ be large integers, to be determined later. We suppose that $M+1<L$ and that $L$ is odd. Set $T=(L+1)(M+1)$. We construct a nonzero polynomial
\[
P(X,Y)= \sum_{i=0}^L\sum_{j=0}^M p_{i,j} \binom{X}{i} Y^j
\]
such that
\begin{equation}\label{constraints}
P(n,m_n)=0
\end{equation}
for $n\in [T/2,T)\cap \Z$. So we have $T$ unknowns and $T/2$ equations. To apply Siegel's lemma, we first compute an upper bound on
\begin{displaymath}
\abs{\binom{n}{i}m_n^j}
\end{displaymath}
for $n\in [T/2, T]\cap \Z, i\le L$ and $j\le M$. By Lemma \ref{binomial} we have
\[
\abs{\binom{n}{i}} \le e^L(M+4)^L.
\]
And
\[
m_n^j \le (2f(n))^j \le c_2^M e^{\delta M T}.
\]
So Siegel's Lemma (see \cite[8.3]{Masser}, for example) gives solutions $p_{i,j}$ to \eqref{constraints}, not all zero, with
\begin{equation}\label{normP}
\abs{p_{i,j}} \le (L+1)(M+1) e^L(M+4)^L c_2^M e^{\delta MT}.
\end{equation}
We now show that $P(X,Y)$ is not too big at either $(n,f(n))$ or $(n,g(n))$. For $n\in  [T/2,T]\cap \Z$ we have, by assumption, $|f(n)-m_n |\le c_0 e^{-DT/2}$. So by Lemma \ref{polydiff}, we have, for $n$ in the same range
\[
|P(n,f(n))-P(n,m_n)| \le T^3 c_2^{3M}e^{2L}(M+4)^{2L}e^{3\delta M T-DT/2}.
\]
In particular, the right hand side here is an upper bound for $|P(n,f(n))|$ for $n\in [T/2,T)\cap \Z$. Similarly, for $n \in [T/2, T]\cap \Z$ we have
\[
|P(n,f(n))-P(n,g(n))| \le  T^3 c_2^{3M}e^{2L}(M+4)^{2L}e^{3\delta M T-DT/2}.
\]

Combining these, we have
\begin{equation}\label{tobeat}
|P(T,m_{T})| \le |P(T,g(T))|+ 2 T^3 c_2^{3M}e^{2L}(M+4)^{2L}e^{3\delta M T-DT/2}.
\end{equation}
And for $n\in [T/2,T)\cap \Z$ we have
\begin{equation}\label{supgn}
|P(n,g(n))| \le 2 T^3 c_2^{3M}e^{2L}(M+4)^{2L}e^{3\delta M T-DT/2}.
\end{equation}

With the aim of showing that $P(T ,m_{T  })=0$, we now consider the function $\phi(z)=P(z+T ,g(z+T ))$, analytic on $|z|\le T $ and show, using Proposition \ref{jensen}, that it is small at the origin. For $|z|\le 2T $ and $i\le L$ we have
\[
\abs{\binom{z}{i}} \le e^{2L}(M+4)^L
\]
(using Lemma \ref{binomial}). So, from the definition of $\phi$, the bounds \eqref{normP} and the growth bound in Wilkie's Theorem we have
\begin{equation}\label{supphi}
|\phi|_T\le T^2 c_2^{2M}e^{3L}(M+4)^{2L} e^{3\delta M T}.
\end{equation}
Let $\A_T=[T/2,T)\cap \Z$, and for $n\in \A_T$ let $a_n=n-T $. Then $|a_n|\le  T/2$ and
\begin{eqnarray*}
|\phi|_T \prod_{n \in \A_T} \frac{|a_n|}{T} & \le |\phi|_T \left(\frac{1}{2}\right)^{\frac{T}{2}}.\\
\end{eqnarray*}
 So by \eqref{supphi} we have
\[
|\phi|_T \prod_{n \in \A_T}  \frac{|a_n|}{T} \le T^2 c_2^{2M} \left( \frac{e^{3}(M+4)^2}{2^{\frac{M+1}{2}}}\right)^{L+1} e^{3\delta M T}
\]

We now consider the second summand in the estimate \eqref{jensenestimate} for $|\phi(0)|$. First, we have
\[
\prod_{k \in \A_T\setminus \{ n\}} \frac{1}{|a_k-a_n|} \le \frac{2^{\frac{T}{2}-1}}{\left( \frac{T}{2}-1\right)!}
\]
for any $n\in \A_T$. And so
\[
\prod_{k \in \A_T\setminus \{ n\}} \frac{|a_k|}{|a_k-a_n|} \le  \frac{  \left( \frac{T}{2}\right)^{\frac{T}{2}-1} 2^{\frac{T}{2}-1}}{\left( \frac{T}{2}-1\right)!}
\]
Estimating the factorial and simplifying, this is at most
\[
c_4(2e)^{\frac{T}{2}-1}
\]
for some positive constant $c_4$. Since
\[
\abs{\frac{T^2-a_na_k}{T^2} }<1
\]
we have the following upper bound (using \eqref{supgn}) for the second summand in \eqref{jensenestimate}.
\[
2c_4 T^4 c_2^{3M}e^{2L}(M+4)^{2L}e^{3\delta M T-DT/2}(2e)^{\frac{T}{2}-1}.
\]

So, by \eqref{tobeat}, and Proposition \ref{jensen} we have
\begin{align*}
|P(T ,m_{T })| \le& T^2 c_2^{2M} \left( \frac{e^{3}(M+4)^2}{2^{\frac{M+1}{2}}}\right)^{L+1}e^{3\delta M T} + c_4 T^4 c_2^{3M}e^{2L}(M+4)^{2L}e^{3\delta M T)-DT/2}(2e)^{\frac{T}{2}} \\
\le & c_4T^4 c_2^{3M}\left( \frac{e^{3}(M+4)^2}{2^{\frac{M+1}{2}}}\right)^{L+1} e^{3\delta MT}  \left( 1 + \exp \left( - \frac{DT}{2} + T\log 2 +\frac{T}{2}\right)\right)
\end{align*}

Now, if we fix $M$ sufficiently large then
\[
\left( \frac{e^{3}(M+4)^2}{2^{\frac{M+1}{2}}}\right)< \frac{1}{2}.
\]
We then take $\delta>0$ so small that
\[
e^{3\delta M(M+1)}<2.
\]
These choices ensure that
$$
 \left( \frac{e^{3}(M+4)^2}{2^{\frac{M+1}{2}}}e^{3\delta M (M+1)}\right)^{L+1}
$$
decays exponentially as $L$ increases. Then taking
\[
D> 2\log 2 +1
\]
(e.g. $D=3$ as claimed in the statement of the Thereom) we will have
\[
|P(T ,m_{T }) |< 1
\]
provided that $L$ is large enough. As $P(T ,m_{T })$ is an integer, it must be zero.

Now inductively suppose that $P(n,m_n)=0$ for all $n \in [T/2,T')\cap \Z$, for some integer $T'>T$. We write $T'=(T+1)(M'+1)$ for some $M'\in \Q$. Using Lemma \ref{binomial}, if $|z|\le T' $ and $i\le L$ then
\begin{equation}\label{binomT'+1}
\abs{ \binom{z}{i} } \le e^L (2M'+2)^L.
\end{equation}
And if $|z| \le 2T' $ and $i \le L$ then
\begin{equation}\label{binom2T'+1}
\abs{ \binom{z}{i} } \le e^L (3M'+4)^L.
\end{equation}
Using \eqref{binomT'+1} and Lemma \ref{polydiff}, for $n \in [T'/2,T']$ we have
\begin{equation}
|P(n,f(n))-P(n,m_n)| \le T^3 c_2^{2M}e^{2L}(M+4)^L(2 M'+2)^L e^{3 \delta MT'-DT'/2}.
\end{equation}
Similarly, for $n$ in the same range,
\begin{equation}
|P(n,f(n)) - P(n,g(n))| \le T^3 c_2^{2M}e^{2L}(M+4)^L(2 M'+2)^L e^{3 \delta MT'-DT'/2}.
\end{equation}
These two inequalities hold in particular for $n \in [T'/2, T')\cap \Z$ and here we also have $P(n,m_n)=0$, so for $n$ in this range we get
\begin{equation}\label{modP(n(gn))}
|P(n,g(n))| \le 2 T^3 c_2^{2M}e^{2L}(M+4)^L(2 M'+2)^L e^{3 \delta MT'-DT'/2}.
\end{equation}
Finally, for $n=T' $ we have
\begin{equation}\label{stilltobeat}
|P(T',m_{T' })| \le |P(T' , g(T' ))| +2 T^3 c_2^{2M}e^{2L}(M+4)^L(2 M'+2)^L e^{3 \delta MT'-DT'/2}.
\end{equation}

As before, we now aim to show that $P(T',g(T'))$ is small, using Proposition \ref{jensen}, and thus show that $P(T' , m_{T' })=0$. To this end let
\[
\psi(z) = P(z+T', g(z+T' )),
\]
analytic on $|z|\le T' $. Let $\A_{T'}= [T'/2 , T')\cap \Z$ so that $ \frac{T'}{2}-1\le \# \A_{T'} \le \frac{T'}{2}$. And for $n\in \A_{T'} $ let $a_n= n-T'$, so that $|a_n| \le \frac{T'}{2}$. Using the definition of $\psi$, together with \eqref{normP} and \eqref{binom2T'+1} and the growth bounds on $g$, we have
\[
|\psi|_{T'} \le T^2 c_2^{2M}e^{2L}(M+4)^L(3M'+4)^L e^{3\delta MT'}.
\]
From this we see that
\begin{equation}\label{G1}
|\psi|_{T'} \prod_{n\in \A_{T'}} \frac{|a_n|}{T'}\le  T^2 c_2^{2M} \left( \frac{e^2(M+4)(3M'+4)}{2^{\frac{M'+1}{2}}}\right)^{L+1} e^{3\delta MT'}.
\end{equation}
To estimate the second summand in \eqref{jensenestimate} we first note that as in the base case, for $n \in \A_{T'}$, we have
\[
\prod_{k \in \A_{T'} \setminus \{ n\}} \frac{|a_k|}{|a_k-a_n|} \le  c_5 (2 e)^{\frac{T'}{2}-1}
\]
for some positive constant $c_5$. And as before, for $n,i \in \A_{T'}$ we have
\[
\abs{\frac{T'^2-a_ia_n}{T'^2}} <1.
\]
So, with \eqref{modP(n(gn))} we have
\begin{equation}\label{G2}
\begin{split}
 \sum_{i\in \A_{T'}}&\left(  |\psi(a_i)| \prod_{j\in \A_{T'}}  \frac{ |T'^2 - a_ia_j|}{T'^2} \prod_{k\in \A_{T'},k\ne n}  \frac{|a_k|}{|a_k-a_n|}\right) \le \\ & c_5T'T^3c_2^{2M}e^{2L}(M+4)^L(2M'+2)^Le^{3\delta MT'-D T'/2}(2e)^{T'/2 -1}.
\end{split}
\end{equation}
So by Proposition \ref{jensen} we get an upper bound for $|P(T',g(T' ))|$, given by the sum of the right hand sides of \eqref{G1} and \eqref{G2}, and then by \eqref{stilltobeat} we have
\[
\begin{split}
|P(T' , m_{T' })| \le c_5T' T^3c_2^{2M}&\left( \frac{e^2(M+4)(3M'+4)}{2^{\frac{M'+1}{2}}}\right)^{L+1} \cdot \\ & e^{3\delta M  T'  }\left(1+ e^{(-D +2 \log 2 +1)T'/2 }\right)
\end{split}
\]
With our earlier choices of $M, \delta$ and $D$, and $L$ fixed sufficiently large, this tends to $0$ as $M' \ge M$ tends to infinity. So $P(T' ,m_{T' })=0$.

Hence, by induction, we have $P(n,m_n)=0$ for all $n$.

Now, there are finitely many analytic algebraic functions $\theta_1,\ldots,\theta_k$ defined on some interval $(a,\infty)$ such that if $P(x,y)=0$ and $x>a$ then $y=\theta_i(x)$ for some $i$. We assume that the functions $\theta_i$ are distinct. For $n>a$ we have $m_n = \theta_i(n)$ for some $i$. We show that $i$ here is independent of $n$, perhaps after increasing $a$. Suppose on the contrary that $i,j\le k$ are not equal and such that there are infinitely many $n>a$ with $m_n=\theta_i(n)$ and infinitely many $n'>a$ with $m_{n'}=\theta_j(n')$. Since for all large $n$ we have
\[
|f(n)-m_n| < c_0 e^{-3 n}
\]
we have, by o-minimality,
\[
|\theta_i(x)-\theta_j(x)| < 2 c_0 e^{-3 x}
\]
for all large $x$. But $\theta_i$ and $\theta_j$ are algebraic, so this cannot happen unless they are equal. Hence there is some $i$ such that for all sufficiently large $n$ we have $m_n =\theta_i(n)$. Since $\theta$ is an algebraic function taking integer values at all large integers, $\theta_i$ must be a polynomial (for instance by the Theorem on page 131 of \cite{Serre}), and the proof is complete.
\end{proof}

\section{Functions taking integer values on a reasonably dense sequence.}
For this section we require another result on approximate continuations.
\begin{thm}[Wilkie] Suppose that $f:\R \to \R$ is definable and suppose that there exist positive $N,\alpha$ and $c_1$  such that $|f(x)|<c_1 \exp \left(\frac{x}{(\log x)^{N+\alpha}}\right)$ for all large $x$. Then there exist $\eta>0,a \in \R$ and an analytic $g: \{ z : \text{Re}(z) >a\} \to \C $ such that 
\begin{enumerate}
\item $|g(x) - f(x) | < e^{ -\eta x}$ for all $x >a$,
\item there exists $c_2>0$ such that $|g(z)| <c_2 \exp \left(\frac{x}{(\log x)^N}\right)$ for all $z$ such that $\text{Re}(z) >a$. 
\end{enumerate}
\end{thm}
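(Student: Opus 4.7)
The plan is to mirror the proof of the earlier Wilkie theorem in this section, adapting it from the exponential growth category to the sub-exponential one. Apply Corollary 4.8 of \cite{Wilkie} to obtain definable functions $f_1,\ldots,f_l:(a,\infty)\to\R$ in $\mathcal{R}_{\text{subexp}}$, pairwise distinct reals $s_1,\ldots,s_l$, and a positive $a$ such that
\[
\left| f(x) - \sum_{i=1}^l f_i(x)\exp(s_i x) \right| < \exp(-Dx)
\]
for $x>a$, where $D>0$ is a large auxiliary constant.

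Because the assumed growth $|f(x)| < c_1\exp(x/(\log x)^{N+\alpha})$ is dominated by every genuine exponential, all the exponents $s_i$ must be nonpositive: a strictly positive $s_i$ would force the corresponding term to dominate and violate the bound. The terms with $s_i<0$ decay at an exponential rate (since $f_i\in\mathcal{R}_{\text{subexp}}$), so they can be absorbed into the error at the cost of replacing $D$ by some smaller $\eta>0$. What remains is the sum $f_0:=\sum_{s_i=0} f_i$, a single element of $\mathcal{R}_{\text{subexp}}$ satisfying $|f(x)-f_0(x)|<e^{-\eta x}$ for $x>a$. Now invoke Theorem 4.2 of \cite{Wilkie}, enlarging $a$ if needed, to continue $f_0$ analytically to the half-plane $\{z:\text{Re}(z)>a\}$, and take $g$ to be this continuation, so that conclusion (1) holds by construction.

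The main obstacle is conclusion (2). The analogue of Lemma 5.3 of \cite{Wilkie} invoked in the earlier theorem in this section only yields the crude bound $|g(z)|\le\exp(\epsilon|z|)$, which is far too weak here. What is required is a sharper version of the same analysis that keeps careful track of iterated logarithms: starting from the real-axis bound $|f_0(x)|\le c\exp(x/(\log x)^{N+\alpha})$ inherited from $f$, one needs to show that the continuation satisfies $|g(z)|\le c_2\exp(|z|/(\log|z|)^N)$ on the half-plane. The drop from $N+\alpha$ to $N$ in the exponent of $\log$ should reflect the standard loss incurred in passing from a real growth bound to a complex one, for instance via a Phragm\'en--Lindel\"of type argument applied to a suitable sector. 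This refinement of the analysis underlying Lemma 5.3 is the technical heart of the statement; everything else is a direct transposition of the earlier argument.
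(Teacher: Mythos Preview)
Your route to conclusion (1) differs from the paper's but is fine. You follow the template of the earlier theorem (Corollary~4.8, then discard the terms with $s_i<0$), while the paper instead invokes Theorems~4.1 and~4.2 of \cite{Wilkie} directly to obtain a $g$ with $|g-f|$ infinitesimal for the valuation ring $\mathcal{F}_{\text{subexp}}$, and then appeals to exponential boundedness of $\R_{\text{an},\exp}$ to upgrade ``smaller than every $\exp(-x/\log^{(k)}x)$'' to ``smaller than $e^{-\eta x}$''. Either approach produces a suitable $g$.

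The genuine gap is in conclusion (2). You correctly diagnose that Lemma~5.3 of \cite{Wilkie} as stated gives only $|g(z)|\le e^{\epsilon|z|}$, and you gesture at a Phragm\'en--Lindel\"of refinement, but you do not supply the actual mechanism. The paper's proof is specific here and short: one reruns the \emph{argument} of Lemma~5.3, but with the auxiliary function $\phi$ appearing there replaced by
\[
\phi(x)=\frac{(\log x)^N\,\log g(x)}{x}.
\]
The role of the extra $\alpha$ in the hypothesis is exactly to make this $\phi$ tend to $0$: from $|g(x)|\le c\exp\bigl(x/(\log x)^{N+\alpha}\bigr)$ on the real axis one gets $\phi(x)\lesssim (\log x)^{-\alpha}\to 0$, which is precisely the input Wilkie's argument needs, and the output is the bound $|g(z)|\le c_2\exp\bigl(|z|/(\log|z|)^N\bigr)$ on the half-plane. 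So the missing idea is not a new Phragm\'en--Lindel\"of estimate but the observation that Wilkie's own proof of Lemma~5.3 is flexible in the choice of $\phi$, and that this particular choice converts the real-axis bound with exponent $N+\alpha$ into the complex bound with exponent $N$.
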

\begin{proof} By Theorems 4.1 and 4.2 in \cite{Wilkie}, there exist   $a\in \R$ and an analytic $g:\{ z : \text{Re}(z) >a\} \to \C $ such that $|g-f|$ is infinitesimal with respect to the valuation ring $\mathcal{F}_{\text{subexp}}$ (in the notation of \cite{Wilkie}). We may suppose that $|g-f|$ is positive. So $|g-f|$ is less than all elements of the decreasing sequence
\[
\exp\left(-\frac{x}{\log x}\right), \exp\left(-\frac{x}{\log \log x}\right),\ldots,\exp\left(-\frac{x}{\log\cdots \log x}\right),\ldots.
\]
It then follows from the fact that $\R_{\text{an},\exp}$ is exponentially bounded (see Proposition 9.2 of \cite{vdDMiller}) that there is a positive $\eta$ such that the first condition in the theorem holds.

It remains to check the growth condition. This follows using Theorem 4.2 of \cite{Wilkie}, following the argument of Lemma 5.3 of \cite{Wilkie} but with the $\phi$ there replaced by 
$$
\frac{(\log x)^N \log g(x)}{x}.$$
The extra $\alpha$ in the growth bound ensures that this $\phi$ tends to $0$, so that Wilkie's argument works. 
\end{proof}

With this in hand, we prove the second main result. Recall that we fix a set $\A$ of positive integers for which there exist positive real $\lambda$ such that for sufficiently large $T$ we have
\[
\frac {T}{(\log T)^\lambda} \ll \A \cap [0,T] \ll \frac{T}{(\log T)^\lambda}.
\]
It follows that there exist positive reals $a,b$ and $\epsilon \in (0,1)$ such that 
\[
a \frac {T}{(\log T)^\lambda} \le \A\cap [\epsilon T,T] \le b \frac {T}{(\log T)^\lambda}
\]
for large $T$. 

The result is as follows. 
\begin{thm} Suppose that $f:[0,\infty)\to \R$ is definable and analytic, and such that $f(n)$ is an integer for $n\in\A$. If there exist $\alpha>0$ and $c_1>0$ such that
\[
|f(x)|<c_1 \exp\left(\frac{x}{(\log x)^{2\lambda+2+\alpha}}\right)
\]
then $f$ is a polynomial.
\end{thm}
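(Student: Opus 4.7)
The plan is to follow the strategy of the proof of Theorem~\ref{close_to_integers}: approximate $f$ by a complex-analytic $g$ on a right half-plane via the Wilkie theorem above, use Siegel's lemma to construct a nonzero integer polynomial $P$ vanishing on $(n, f(n))$ for $n$ in an initial segment of $\A$, extend this vanishing to all subsequent elements of $\A$ by induction via Proposition~\ref{jensen}, then use o-minimality to conclude that $P(x, f(x))$ is identically zero, and finally exploit the density of $\A$ to force the algebraic branch of $P = 0$ which equals $f$ to be a polynomial.

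Applying the second Wilkie theorem of this section with $N = 2\lambda + 2$ (the extra $\alpha$ in the growth hypothesis is exactly what permits this) and translating, one obtains an analytic $g$ on $\{\Re z > 0\}$ with $|f(x) - g(x)| < e^{-\eta x}$ and $|g(z)| \leq c_2 \exp(|z|/(\log|z|)^{2\lambda+2})$. Fix positive integers $L$ and $M$, with $L$ large and $M$ allowed to grow slowly with $L$ (for reasons made clear in the balance below), and set $T$ so that $\#(\A \cap [T/2, T]) \leq (L+1)(M+1)/2$; by the density hypothesis, $T$ is of order $LM(\log(LM))^\lambda$. Since $f(n) \in \Z$ on $\A$, Siegel's lemma applied to the integer linear system $\sum p_{ij} \binom{n}{i} f(n)^j = 0$ for $n \in \A \cap [T/2, T]$ produces a nonzero integer polynomial $P(X, Y) = \sum p_{ij} \binom{X}{i} Y^j$ with the usual exponential bound on the $|p_{ij}|$ coming from Lemma~\ref{binomial} and the growth of $f$.

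The induction mirrors Theorem~\ref{close_to_integers}, extending along $\A$ instead of $\N$. Given $P(n, f(n)) = 0$ for all $n \in \A$ strictly less than the next element $N'$ of $\A$, I apply Proposition~\ref{jensen} to $\phi(z) = P(z + N', g(z + N'))$ on $|z| \leq N'$, using as zeros $a_n = n - N'$ for $n \in \A \cap [\epsilon N', N')$. The decisive estimate is that $\prod |a_n|/N'$ is of size $\exp(-c_0 N'/(\log N')^\lambda)$ with $c_0 = -a \log(1-\epsilon) > 0$, coming from the density of $\A$; this must dominate the positive contributions $\exp(MN'/(\log N')^{2\lambda+2})$ from $|g|^M$ (which it does, because $1/(\log N')^\lambda$ decays more slowly than $1/(\log N')^{2\lambda+2}$) and the binomial factor $\exp(O(L \log N'))$ (negligible compared to $N'/(\log N')^\lambda$ for $L$ fixed and $N'$ large). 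The second summand in the Jensen estimate is controlled as in Theorem~\ref{close_to_integers} via Lemma~\ref{polydiff}, the bound $|f - g| < e^{-\eta x}$, and Tijdeman's lower bound on $\prod|a_k - a_n|$, contributing a further exponentially small factor. Hence $|P(N', f(N'))| < 1$ for $L$ chosen sufficiently large, and being an integer it must vanish.

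Once the induction is complete, the definable analytic function $x \mapsto P(x, f(x))$ has infinitely many zeros on $[0, \infty)$, so by o-minimality of $\R_{\text{an},\text{exp}}$ it is identically zero; in particular $f$ is algebraic over $\R(x)$, and as in the first theorem agrees with an analytic algebraic branch $\theta$ for large $x$. To conclude that $\theta$ is a polynomial I would rule out the alternative: any non-polynomial algebraic $\theta$ has its graph contained in an irreducible algebraic curve of degree $d \geq 2$, and a counting bound for integer points on such a curve gives $\#\{n \leq T : \theta(n) \in \Z\} = O_\epsilon(T^{1/d + \epsilon})$, asymptotically much smaller than $T/(\log T)^\lambda$ and therefore incompatible with $\A \subseteq \{n : \theta(n) \in \Z\}$. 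The main obstacle is the calibration of the balance in the induction step, since the density-based decay $\exp(-c_0 N'/(\log N')^\lambda)$ is much weaker than the $2^{-N'/2}$ available in Theorem~\ref{close_to_integers}; this forces $M$ to grow with $L$ rather than be kept fixed, and makes the base case of the induction, where $N' \asymp T$, the most delicate point of the estimate.
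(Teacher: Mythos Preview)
Your proposal is correct and follows essentially the same approach as the paper: Wilkie's approximation, Siegel's lemma for the auxiliary $P$, induction along $\A$ via Proposition~\ref{jensen}, and o-minimality to force $P(x,f(x))\equiv 0$. The only differences are bookkeeping---the paper sets $T=(L+1)(M+1)$ directly and takes $M+1$ explicitly around $(\log(L+1))^{\lambda+1}$ rather than calibrating $T$ to the density of $\A$---and the final step, where the paper simply invokes the theorem on page~131 of \cite{Serre} in place of your integer-point count.
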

\begin{proof} To prove this we proceed as before and start by applying Wilkie's theorem above and translating to obtain a positive $\eta$ and a $g$, analytic on the right half-plane $\{ z : \text{Re}(z)\ge 0\}$, such that 
\begin{equation}\label{primes:g_growth}
|g(z)| \le c_2 \exp \left(\frac{|z|}{(\log |z|)^{2\lambda+2}}\right)
\end{equation}
and
\begin{equation}\label{primes:g_close_to_f}
|f(x)-g(x)| \le c_3 e^{-\eta x}
\end{equation}
for $x\ge 0$, where $c_2$ and $c_3$ are positive and we assume $c_2\ge c_1$.  Below we set $\delta(x)=\frac{x}{(\log x)^{2\lambda+2}}$.

We fix large integers $L$ and $M$ and aim to construct a nonzero polynomial 
\[
P(X,Y)= \sum_{i=0}^L\sum_{j=0}^M p_{i,j} \binom{X}{i}Y^j
\]
with integer coefficients such that 
\begin{equation}\label{primes:constraint}
P(n,f(n))=0
\end{equation}
for all $n\in \A\cap [\epsilon T,T]$, where $T=(L+1)(M+1)$. Aiming to use Siegel's Lemma to control the size of the coefficients, note that Lemma \ref{binomial} implies that if $k\ge 1$ and $|z|\le kT$ and $i\le L$ then
\[
\left| \binom{z}{i} \right| \le (ek)^L (M+4)^L.
\]
And for $j\le M, x\le kT$ we have 
\[
|f(x)|^j\le c_1^M e^{M\delta(kT)}.
\]
Taking $k=1$ here, these estimates together with Siegel's Lemma show that there are integers $p_{i,j}$ not all zero such that \eqref{primes:constraint} holds for all $n\in\A \cap [\epsilon T,T]$ and such that
\begin{equation}\label{primes:coefficients_of_P}
|p_{i,j}| \le T e^L(M+4)^L c_1^M e^{M\delta (T)}.
\end{equation}
Combining this with Lemma \ref{polydiff}, and using \eqref{primes:g_close_to_f} we have
\[
|P(x,f(x))-P(x,g(x))| \le T^3 (ek)^{2L}(M+4)^{2L} c_2^{3M} e^{3M\delta(kT)} e^{-\eta\epsilon T}
\]
where $k\ge 1$ and $x\in [\epsilon T,kT]$. So for $n\in \A\cap [\epsilon T,T]$ we have
\begin{equation}\label{prime:P(n,g(n))}
|P(n,g(n))| \le T^3 e^{2L}(M+4)^{2L} c_2^{3M} e^{3M\delta(T)} e^{-\eta\epsilon T}
\end{equation}
and for $x\in (T,kT]$ we have
\begin{equation}\label{prime:to_beat}
|P(x,f(x))|\le |P(x,g(x))|+ T^3 (ek)^{2L}(M+4)^{2L} c_2^{3M} e^{3M\delta(kT)} e^{-\eta\epsilon T}.
\end{equation}
In order to apply Proposition \ref{jensen} we estimate $|P(z,g(z))|$ for $z$ in the closed right half-plane, with $|z|\le kT$. We have
\begin{equation}\label{prime:modphi}
|P(z,g(z))| \le T^2 (ek)^{2L} (M+4)^{2L} c_2^{2M} e^{2M\delta(kT)}.
\end{equation}

Now let $T_1=\min \{ n\in \A : n>T\}$, and fix $\ell$ such that $T<T_1\le \ell T$ (for instance we can take $\ell$ around $1/\epsilon$). Put $\phi(z)= P(z+T_1,g(z+T_1))$, analytic on $|z|\le T_1$. By \eqref{prime:modphi} with $k=2\ell$ we have
\begin{equation}\label{prime:modphiT_1}
|\phi|_{T_1} \le T^2 (2e\ell)^{2L} (M+4)^{2L} c_2^{2M} e^{2M\delta(2\ell T)}.
\end{equation}
And 
\begin{eqnarray*}
\prod_{n\in\A \cap [\epsilon T,T]} \frac{ |n -T_1|}{T_1} &\le & \left( 1 -\epsilon \frac{T}{T_1}\right)^{ \left[ a\frac{T}{(\log T)^\lambda}\right]} \\
&\le & r^{ \frac{T}{(\log T)^\lambda}}
\end{eqnarray*}
where $r= (1-\epsilon^2)^{a/2}<1$.

Arguing as in the previous proof, for $n\in \A\cap [\epsilon T,T]$ we have
\[
\prod_{m\in\A\cap [\epsilon T,T]\setminus \{n\}} \left| \frac{m-T_1}{m-n}\right| \le  \left(c \log T\right)^{ b \frac{T}{(\log T)^\lambda}},
\]
for some $c>1$ (depending on $\epsilon,\ell$ and $a$ but not $T$ or $n$). 

And as in the previous proof, for $n\in \A\cap [\epsilon T,T]$ we have
\[
\prod_{m\in \A\cap [\epsilon T,T]} \abs{\frac{T_1^2-(m-T_1)(n-T_1)}{T_1^2}} < 1.
\]

Applying Proposition \ref{jensen} and using (\ref{prime:modphiT_1}),(\ref{prime:P(n,g(n))}), and (\ref{prime:to_beat}) and the previous three inequalities, we have
\[
\begin{split}
|P(T_1,f(T_1))| & \le T^4 (2e\ell)^{2L}(M+4)^{2L}c_2^{3M}e^{3M\delta(2\ell T)}r^{\frac{T}{(\log T)^\lambda}}\\
&\cdot\left( 1+ \exp\left(  b \frac{T}{(\log T)^\lambda}\log( c\log T) - \left(\eta\epsilon T+\frac{T}{(\log T)^\lambda}\log r\right)\right)\right)
\end{split}
\]
We consider the two factors on the right separately. First, using the fact that $T=(L+1)(M+1)$, the definition of $\delta(x)$, and taking $M+1$ to be an integer around $(\log (L+1))^{\lambda+1}$, we have
\begin{eqnarray*}
T^4 (2e\ell)^{2L}(M+4)^{2L}c_2^{3M}e^{3M\delta(2\ell T)}r^{\frac{T}{(\log T)^\lambda}}\le T^4 (2e\ell)^{2L}(M+4)^{2L}c_2^{3M}e^{6\ell \frac{M(M+1)(L+1)}{(\log(L+1))^{2\lambda+2}}}r^{\frac{(M+1)(L+1)}{2^\lambda(\log (L+1))^\lambda}}\\
\le \exp\left( (L+1) \left( 6 \log (M+4)+\frac{1}{2^\lambda}(\log r)\log (L+1)\right)\right) 
\end{eqnarray*}
for sufficiently large $L$. Since $r<1$, this is at most $1/4$ for sufficiently large $L$. 

For the second factor we have
\[
 1+ \exp\left( b \frac{T}{(\log T)^\lambda}\log (c\log T) - \left(\eta\epsilon T+\frac{T}{(\log T)^\lambda}\log r\right)\right)<\frac{3}{2}
 \]
 for large enough $L$. So $|P(T_1,f(T_1))|$ is an integer less than $1$, hence $P(T_1,f(T_1))=0$.

We now inductively assume that $T'\ge T_1$ and that $P(n,f(n))=0$ for all $n \in \A\cap [\epsilon T, T']$. We write $T'=(M'+1)(L+1)$. Suppose that $k\ge 1$. For $|z| \le k T'$ and $i\le L$  we have
\begin{equation}\label{prime:ind_binom}
\abs{\binom{z}{i}} \le (3 k e)^L (M'+1)^L.
\end{equation}
Using the fact that $\max \{ |f(x)|^j,|g(x)|^j \} \le c_2^M e^{M\delta (k T')}$ for $x\le k T' $ and $j\le M$, together with \eqref{primes:coefficients_of_P},\eqref{primes:g_close_to_f} and Lemma \ref{polydiff}, this implies that
\begin{equation}\label{prime:ind_Pdiff}
|P(x,f(x))-P(x,g(x))| \le T^3 (3 k e)^{2L} (M+4)^L (M'+1)^L c_2^{3M} e^{3M \delta (k T')} e^{-\eta\epsilon T'}
\end{equation}
for $x\in [\epsilon T',kT']$. In particular for $x \in (T',k T']$ we have 
\begin{equation}\label{prime:ind_to_beat}
|P(x,f(x))| \le |P(x,g(x))| +T^3 (3 k e)^{2L} (M+4)^L (M'+1)^L c_2^{3M} e^{3M \delta (k T')} e^{-\eta\epsilon T'}
\end{equation}
As in the base case we now let $T_1'=\min \{ n\in \A : n>T'\}$ and put $\psi(z)= P(z+T_1',g(z+T_1'))$, analytic for $|z|\le T_1'$.Let $\ell$ such that $T'_1\le \ell T'$. Using Proposition \ref{jensen}, and estimating as in the base case, and using \eqref{prime:ind_to_beat} we have
\[
\begin{split}
|P(T_1',f(T_1'))|\le T^2 (6\ell k)^{2L}(M+4)^L(M'+1)^L c_2^{2M} e^{2M\delta(2 l T')}r^{\frac{T'}{(\log T')^\lambda}} \\+ 2 \frac{T'}{(\log T')^\lambda} T^3 (3\ell e)^{2L}(M+4)^L(M'+1)^L c_2^{3M}e^{3 M \delta (\ell T')}e^{-\eta\epsilon T'} \left(  \log T'\right)^{c_6 \frac{T'}{(\log T')^\lambda}}
\end{split}
\]
for some $r<1$ and some positive $c_6$.  With the choice of $M$ made earlier, we see that for $M'>M$ we have $|P(T_1',f(T_1'))|<1$ once $L$ is fixed sufficiently large. So $P(T_1',f(T_1'))=0$. And then inductively we have $P(n,f(n))=0$ for all sufficiently large $n\in\A$. 

By o-minimality and analyticity it follows that $P(x,f(x))=0$ for all $x\ge 0$. So $f$ is algebraic. It then follows from the theorem on page 131 of \cite{Serre} that $f$ must in fact be a polynomial.

\end{proof}

\bibliographystyle{amsplain}
\bibliography{refs}

\end{sffamily}

\end{document}